\numberwithin{equation}{section}
\newtheorem{thm}{Theorem}[section]
\newtheorem{lma}[thm]{Lemma}
\newtheorem{cor}[thm]{Corollary}
\newtheorem{prop}[thm]{Proposition}
\renewcommand{\ge}{\geqslant}
\renewcommand{\le}{\leqslant}
\renewcommand{\geq}{\geqslant}
\renewcommand{\leq}{\leqslant}
\newcommand{\ad}{\dim_\textup{A}}
\newcommand{\hd}{\dim_\textup{H}}
\newcommand{\eps}{\varepsilon}
\title{Attainable values for the  Assouad dimension of projections}
\author{Jonathan M. Fraser \& Antti K\"aenm\"aki}
\begin{document}

\date{}

\maketitle

\begin{abstract}
We prove that for an arbitrary upper semi-continuous function $\phi\colon G(1,2) \to [0,1]$ there exists a compact set $F$ in the plane  such that $\dim_\textup{A} \pi F = \phi(\pi)$ for all $\pi \in G(1,2)$, where $\pi F$ is the orthogonal projection of $F$ onto the line $\pi$.  In particular, this shows that the Assouad dimension of orthogonal projections can take on any finite or countable number of distinct values on a set of projections with positive measure.  It was previously known that two distinct values could be achieved with positive measure.  Recall that for other standard notions of dimension, such as the Hausdorff, packing, upper or lower box dimension, a single value occurs almost surely.
\\ \\ 
\emph{Mathematics Subject Classification} 2010: primary:  28A80.
\\
\emph{Key words and phrases}:  Assouad dimension, orthogonal projection
\end{abstract}

\section{Dimensions of Projections}

Starting with the seminal work of Marstrand \cite{marstrand} from the 1950s, the behaviour of dimension under orthogonal projection has received a lot of attention in the fractal geometry and geometric measure theory literature.  The general principle has been that the dimension of the orthogonal projection of a Borel set in $\mathbb{R}^d$ onto a $k$-dimensional subspace almost surely does not depend on the specific choice of subspace.  This has been verified for many different notions of dimension and in more general settings, see for example the surveys \cite{proj1,proj2,proj3} and the recent advance \cite{kaenmakiorponenvenieri}.  The Marstrand-Mattila projection theorem for Hausdorff dimension states that, given a Borel set $F \subseteq \mathbb{R}^d$, we have
\[
\hd \pi F \ = \ \min\{k, \hd F\}
\]
for almost every $\pi \in G(k,d)$, where $\hd$ denotes the Hausdorff dimension, and $G(k,d)$ is the Grassmanian manifold consisting of $k$-dimensional subspaces of $\mathbb{R}^d$.  For notational convenience we identify a subspace $\pi \in G(k,d)$ with the orthogonal projection of $\mathbb{R}^d$ onto that subspace.  It was recently proved by Fraser and Orponen \cite{fraserorponen} in the planar case and   Fraser   \cite{fraser} in the general case that for any set $F \subseteq \mathbb{R}^d$, we have
\begin{equation} \label{assprojthm}
\ad \pi F \ \geq \ \min\{k, \ad F\}
\end{equation}
for almost every $\pi \in G(k,d)$, where $\ad$ denotes the Assouad dimension, see below for the definition. Very recently, Orponen \cite{orponen} has improved the planar case of \eqref{assprojthm} by showing that the inequality holds for all $\pi \in G(1,2)$ outside of a set of Hausdorff dimension zero. The surprising additional result from \cite{fraserorponen} is  that one cannot improve the almost sure inequality above  to an almost sure equality.  In particular, it was shown in  \cite{fraserorponen} that $\ad \pi F$ can take on two distinct values with positive measure.  The purpose of this article is to push this observation further: \emph{how wild can $\ad \pi F$ be as a function of $\pi$?}

To finish the introduction, we recall the definition of the Assouad dimension.  For any non-empty bounded set $E \subset \mathbb{R}^d$ and $r>0$, let $N_r (E)$ be the smallest number of open sets with diameter less than or equal to $r$ required to cover $E$.  The \emph{Assouad dimension} of a non-empty set $F\subseteq \mathbb{R}^d$ is defined by
\begin{eqnarray*}
\dim_\text{A} F & = &  \inf \Bigg\{ \  s \geq 0 \  : \ \text{      $ (\exists \, C>0)$ $(\forall \, R>0)$ $(\forall \, r \in (0,R) )$  } \\ 
&\,& \hspace{35mm} \text{ $\sup_{x \in F}N_r\big( B(x,R) \cap F \big) \ \leq \ C \bigg(\frac{R}{r}\bigg)^s$ } \Bigg\}
\end{eqnarray*}
where $B(x,R)$ denotes the closed ball centred at $x$ with radius $R$. It is well-known that the Assouad dimension is always an upper bound for the Hausdorff dimension and upper box dimension; for example, see \cite{luukkainen}.

\section{Results}

Our main result allows one to construct compact sets in the plane whose projections behave in a predefined manner with respect to the Assouad dimension.

\begin{thm} \label{main1}
Given an upper semi-continuous function $\phi\colon G(1,2) \to [0,1]$, there exists a compact set $F$ in the plane  with $\ad F = 0$ such that $\ad \pi F = \phi(\pi)$ for all $\pi \in G(1,2) $. 
\end{thm}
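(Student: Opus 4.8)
The plan is to build $F$ as a countable union of carefully placed "approximate arithmetic progression" type sets, exploiting the key fact that the Assouad dimension of a planar set can be forced to appear, under a single projection direction, by placing a sequence of rescaled copies of a set resembling $\{0, 1/n, 2/n, \ldots, 1\}$ scaled by $\delta_n$ with $\delta_n \to 0$ rapidly; such weak tangent / arithmetic progression constructions are the standard way (as in Fraser–Orponen) to make $\dim_{\mathrm A} \pi F$ jump up for one direction $\pi$ while the set $F$ itself has Assouad dimension $0$. Let me think carefully about the structure.

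First I would reduce to a countable dense situation. Since $\phi$ is upper semi-continuous on the compact metric space $G(1,2) \cong \mathbb{RP}^1$, it is a decreasing limit of continuous (even Lipschitz) functions, and more importantly its sublevel sets $\{\phi < t\}$ are open. I would fix a countable dense set of directions $\{\pi_i\}_{i \in \mathbb N} \subseteq G(1,2)$ together with a countable family of "target data" consisting of pairs $(\pi_i, q_{i})$ where $q_i$ ranges over rationals in $[0,\phi(\pi_i)]$, arranged so that for each $i$ the values $q_i$ approach $\phi(\pi_i)$. For each such pair I would design a compact "gadget" $F_{i}$, a small set contained in a tiny ball, with two properties: (a) $\dim_{\mathrm A} F_i = 0$, so the gadget contributes nothing to the global Assouad dimension of $F$ when the pieces are placed in a summable, well-separated fashion (here one uses the standard lemma that a countable union of uniformly doubling-at-scale-$0$ sets, suitably separated, still has Assouad dimension $0$ — or more safely, one controls things scale by scale); and (b) the projection $\pi_i F_i$ contains a rescaled arithmetic-progression-like configuration witnessing $\dim_{\mathrm A} \pi_i F_i \geq q_i$, hence by density and the behaviour of Assouad dimension under projection, $\dim_{\mathrm A} \pi F \geq q_i$ for all $\pi$ in a small neighbourhood of $\pi_i$. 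The set $F$ is then the closure of $\bigcup_i F_i$ together with a convergence point; placing the $F_i$ along a sequence converging to the origin with super-exponentially decaying sizes and gaps ensures compactness and ensures no interaction between gadgets at any fixed pair of scales.

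The two inequalities are then established as follows. For the lower bound $\dim_{\mathrm A}\pi F \geq \phi(\pi)$: given $\pi$ and $t < \phi(\pi)$, upper semi-continuity is the wrong direction, so instead I use that $\{\phi > s\}$ need not be open — but I only need, for each $\pi$ and each $t<\phi(\pi)$, some $\pi_i$ close to $\pi$ with $\phi(\pi_i)$ close to $\phi(\pi)$ and a rational $q_i \in (t, \phi(\pi_i)]$; by density of $\{\pi_i\}$ and by choosing the approximation scheme so that $q_i \to \phi(\pi_i)$ while $\pi_i \to \pi$, and using lower semi-continuity... wait — here is the delicate point. For the upper bound $\dim_{\mathrm A} \pi F \leq \phi(\pi)$: fix $\pi$ and $s > \phi(\pi)$. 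By upper semi-continuity the set $U = \{\phi < s\}$ is an open neighbourhood of $\pi$, so all but finitely many gadgets $F_i$ — namely those with $\pi_i \in U$, which is a co-finite condition once we also arrange that only finitely many $\pi_i$ lie outside any fixed neighbourhood shrinking appropriately... no, $\{\pi_i\}$ is dense so infinitely many lie in $U$ and infinitely many outside. The resolution: arrange that for $\pi_i \notin U$, the gadget $F_i$ is nevertheless "far" from the direction $\pi$ in the sense that $\pi F_i$ is contained in an extremely small ball and the arithmetic progression structure it carries, when viewed under $\pi$ rather than $\pi_i$, is destroyed (projected nearly to a point or smeared). This is exactly the mechanism in Fraser–Orponen: an arithmetic progression at scale $\delta$ in direction $\pi_i$ projects under $\pi \neq \pi_i$ to something that, at the relevant scales $R/r$, does not look like an arithmetic progression unless $|\pi - \pi_i|$ is comparably small. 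So I choose the size $\delta_i$ of gadget $F_i$'s progression as a function of the distance from $\pi_i$ to the "bad" region, fast enough that each gadget only influences directions within distance $\sim$ (some modulus) of $\pi_i$, chosen compatibly with the open sets $\{\phi < s\}$.

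The main obstacle — and the part requiring the most care — is precisely this interaction control: simultaneously (i) making each gadget's progression long enough and fine enough to force $\dim_{\mathrm A}\pi_i F_i \geq q_i$, (ii) making it degenerate enough in nearby-but-different directions that the upper bound $\leq \phi(\pi)$ is not violated, and (iii) packing all gadgets into one compact set without their union's Assouad dimension exceeding $0$ or, worse, some gadget's progression lining up with another gadget's progression to create a spurious long progression in some direction. I expect this to be handled by a diagonalisation: enumerate the countably many (direction, target-value) requirements, and process them one at a time, at each stage choosing the new gadget's location, scale, and progression parameters to be so much smaller than everything chosen before (super-exponentially) that at any pair of scales $0 < r < R$ at most one gadget is "active", reducing every global estimate to the single-gadget estimate, which is the elementary computation for arithmetic progressions. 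The verification that $\dim_{\mathrm A} F = 0$ is then immediate from the same scale-separation, since at scales where a gadget is active it is a finite progression (box dimension, hence local Assouad dimension, tending to $0$ as the parameters are tuned), and at scales where no gadget is active $F$ looks like a lacunary sequence of points, which has Assouad dimension $0$.
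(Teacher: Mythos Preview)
Your overall architecture---gadgets placed along a rapidly convergent sequence, scale separation to force $\ad F = 0$, weak-tangent-type structures to push up $\ad \pi F$ in prescribed directions---matches the paper's strategy. But there is a genuine gap in the lower bound, and you flag it yourself without resolving it. You fix a countable dense set $\{\pi_i\} \subset G(1,2)$ and attach to each $\pi_i$ rational targets $q_i \le \phi(\pi_i)$. For $\ad \pi F \ge \phi(\pi)$ at an arbitrary $\pi$, you need a sequence $\pi_{i_k} \to \pi$ with $\phi(\pi_{i_k}) \to \phi(\pi)$. Density of $\{\pi_i\}$ in $G(1,2)$ alone does \emph{not} give this: take $\phi = \mathbf{1}_{\{\pi_0\}}$ for a single direction $\pi_0$ (which is upper semi-continuous). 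Any countable dense set chosen without reference to $\phi$ will generically miss $\pi_0$, so every $\phi(\pi_i) = 0$, every $q_i = 0$, and your construction yields $\ad \pi_0 F = 0 \neq 1 = \phi(\pi_0)$. Upper semi-continuity is, as you note, the wrong direction here and cannot rescue this.

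The paper's fix is a separate lemma (Lemma~3.1): one covers the \emph{graph} $\{(\pi,\phi(\pi))\}$ by balls of radius $1/k$ with centres controlled in number by box dimension, and takes $\Pi$ to be the projection of these centres. This guarantees that for every $\pi$ there is a sequence $\pi_{n_k} \in \Pi$ with $\pi_{n_k} \to \pi$, $\phi(\pi_{n_k}) \to \phi(\pi)$, and crucially a quantitative rate $|\pi_{n_k} - \pi| \le n_k^{-1/4}$. That rate is not cosmetic: the gadgets are stretched by factors $h,v$ with $h/v = 1/\log(g) \to 0$, and the lower-bound argument needs $|\pi_{n_k} - \pi| \cdot v/h \to 0$ so that projecting onto $\pi$ rather than $\pi_{n_k}$ does not destroy the self-similar structure in the weak tangent. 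Your proposal gestures at this mechanism (``destroyed\ldots unless $|\pi - \pi_i|$ is comparably small'') but does not tie the gadget aspect ratio to the enumeration of $\Pi$ quantitatively. A secondary issue: your ``arithmetic progression'' gadgets, as written, witness Assouad dimension $1$ (their weak tangent is an interval), not a prescribed $q_i \in (0,1)$; the paper instead uses finite-level approximations $E_m(c)$ of a self-similar Cantor set with $\log 2 / \log(1/c) = \phi(\pi_i)$.
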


%The assumption that $\phi$ achieves the value $1$ at least once may appear to be strange, but this comes from our proof where there is always one direction we are unable to control.  This appears to be necessary for our method, but we wonder if this assumption can be removed in general. We also remark that the assumption on upper semi-continuity is only needed to prove the upper bound $\ad \pi F \le \phi(\pi)$ in the claim. 

We remark that the assumption on upper semi-continuity is only needed to prove the upper bound $\ad \pi F \le \phi(\pi)$ in the claim.  See Section \ref{discussion} for a discussion of the sharpness of this result and possible future directions.

In the context of Marstrand's projection theorem, that is, the approach of describing the dimensions of generic projections, we obtain the following corollaries, which are in stark contrast to what is possible for other standard notions of dimension, such as the Hausdorff, packing or box dimensions.

\begin{cor} \label{cor1}
Let $E \subseteq [0,1]$ be any finite or countable set.  There exists a compact set $F$ in the plane  such that for all $s \in E$, we have $\dim_\textup{A} \pi F = s$ for a set of $\pi \in G(1,2) $ of  positive measure.
\end{cor}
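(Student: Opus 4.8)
The plan is to deduce Corollary~\ref{cor1} directly from Theorem~\ref{main1} by exhibiting, for each finite or countable $E \subseteq [0,1]$, an upper semi-continuous function $\phi \colon G(1,2) \to [0,1]$ whose level sets $\{\pi : \phi(\pi) = s\}$ have positive measure for every $s \in E$. Once such a $\phi$ is in hand, Theorem~\ref{main1} produces a compact $F$ with $\dim_\textup{A} \pi F = \phi(\pi)$ for all $\pi$, and restricting to $\pi$ with $\phi(\pi) = s$ gives a positive-measure set of projections attaining the value $s$, as required.

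So the real content is the construction of $\phi$. Identify $G(1,2)$ with the circle $\RP$ equipped with its natural (Lebesgue-type) probability measure. Enumerate $E = \{s_1, s_2, \dots\}$ (finite or countable). I would carve out a sequence of pairwise disjoint closed arcs $I_1, I_2, \dots \subseteq \RP$, each of positive measure, for instance by taking a convergent geometric-type allocation of total length so that the $I_n$ are disjoint and each has positive length. On the interior of $I_n$ set $\phi \equiv s_n$, and on the (measure-zero or small) complement $\RP \setminus \bigcup_n I_n$ set $\phi \equiv \sup E$ (or $0$, whichever makes upper semi-continuity cleanest). Upper semi-continuity then needs to be checked only at the endpoints of the arcs and at accumulation points of the arc system: at an endpoint of $I_n$ the nearby values are $s_n$ on one side and whatever the neighbouring piece contributes on the other, so assigning the larger of the two adjacent values at each boundary point makes $\{\phi \ge t\}$ closed for every $t$. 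A clean way to guarantee this uniformly is to define $\phi(\pi) = \limsup_{\pi' \to \pi} \psi(\pi')$ where $\psi$ is the "raw" piecewise-constant function equal to $s_n$ on $\mathrm{int}\, I_n$ and $0$ elsewhere; the upper envelope of any function is automatically upper semi-continuous, and since each $I_n$ has nonempty interior, $\phi$ still equals $s_n$ on $\mathrm{int}\, I_n$, so the positive-measure level sets survive.

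The main obstacle, such as it is, is purely bookkeeping in the countable case: one must arrange the arcs so that (i) they are pairwise disjoint, (ii) each has strictly positive measure, and (iii) the upper-semicontinuous regularization does not accidentally inflate the value on a positive-measure subset of some $\mathrm{int}\, I_n$ — which it cannot, since regularization only changes values on the topological boundary $\bigcup_n \partial I_n$, a closed set that can be taken to have measure zero (a countable union of finite sets of endpoints, unless the arcs accumulate, in which case one simply chooses their lengths to decay fast enough that the accumulation set is also null). There is no analytic difficulty; every step is elementary, and the corollary follows.

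Finally, I would note for completeness that nothing in the argument requires the values $s_n$ to be distinct or the set $E$ to be topologically nice — only that $E$ is countable — and that the same scheme shows the level-set structure can be made as rich as one likes subject to the single global constraint imposed by Theorem~\ref{main1}, namely that $\phi$ be upper semi-continuous.
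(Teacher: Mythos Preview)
Your approach is correct and essentially identical to the paper's: pick pairwise disjoint arcs $I_n \subset G(1,2)$ of positive measure, declare $\phi \equiv s_n$ on $I_n$ and $0$ elsewhere, and invoke Theorem~\ref{main1}. The paper states this in one sentence without addressing upper semi-continuity further; your passage to the upper semi-continuous envelope $\phi = \limsup_{\pi' \to \pi} \psi(\pi')$ is in fact a welcome extra step, since in the countably infinite case the arcs must accumulate somewhere and the paper's raw definition (with value $0$ on the complement) need not be upper semi-continuous at such accumulation points without exactly the kind of care you supply.
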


\begin{proof}
This follows immediately from Theorem \ref{main1} by letting $\{I_x\}_{x \in E}$ be a family of pairwise disjoint compact subsets of $G(1,2)$ each with non-empty interior and then choosing $\phi$ to satisfy $\phi(\pi) = x$ for all $\pi \in I_x$ and $\phi(\pi) = 0$ otherwise. 
\end{proof}

This answers \cite[Question 2.2]{fraserorponen} which asked how many distinct values the Assouad dimension
of $\pi F$ can assume for a set of $\pi$ with positive measure.  This corollary also answers  \cite[Question 2.7]{fraserorponen}, which asked if there exists a  compact planar set for which the Assouad dimension of the projection takes different values on two sets with non-empty interior.  Finally, we can answer \cite[Question 2.6]{fraserorponen} which asked if, given $0\leq s < \log_5 3$, there exists a compact planar set with Hausdorff dimension $s$, for which the Assouad dimension of the projections is not almost surely constant.  This was motivated by the fact that the examples constructed in \cite{fraserorponen} necessarily had Hausdorff dimension at least $\log_5 3$.  The examples constructed in Theorem \ref{main1} provide such a set when $s=0$, which was the most difficult case, and to adapt this for $s>0$ one can simply add to $F$ a self-similar set of Hausdorff and Assouad dimension $s$ which is contained in a line.  The projections of this set onto  all but one subspace will also have dimension $s$ and therefore will only influence the Assouad dimensions of projections where $\ad \pi F < s$.

In addition to finding many values which appear as the Assouad dimension of projections with positive probability, we can also provide the following complementary result which \emph{avoids} all values almost surely.

\begin{cor}
There exists a compact set $F$ in the plane  such that, for all $s \in [0,1]$, there is at most one $\pi$ such that $\dim_\textup{A} \pi F=s$.
\end{cor}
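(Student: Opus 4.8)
The plan is to deduce the statement from Theorem~\ref{main1} by exhibiting an injective, upper semi-continuous function $\phi \colon G(1,2) \to [0,1]$. Granting such a $\phi$, Theorem~\ref{main1} provides a compact set $F$ in the plane with $\ad \pi F = \phi(\pi)$ for every $\pi \in G(1,2)$, and injectivity of $\phi$ then says precisely that each $s \in [0,1]$ equals $\ad \pi F$ for at most one $\pi$. So the whole problem reduces to constructing such a $\phi$.

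Recall that $G(1,2)$ is homeomorphic to the circle $\mathbb{R}/\mathbb{Z}$; fix a homeomorphism $\gamma \colon \mathbb{R}/\mathbb{Z} \to G(1,2)$ and, for $t \in [0,1)$, write $\gamma(t)$ for $\gamma(t+\mathbb{Z})$. One cannot take $\phi$ \emph{continuous}: a continuous injective image of the circle in $\mathbb{R}$ would be a non-degenerate closed interval, which is impossible, since deleting an interior point disconnects an interval while deleting any point leaves the circle connected. Since Theorem~\ref{main1} only requires upper semi-continuity, though, it is enough to work with that. I would simply set
\[
\phi(\gamma(0)) = 1 \qquad\text{and}\qquad \phi(\gamma(t)) = t \quad\text{for } t \in (0,1),
\]
which is injective because $t \mapsto t$ maps $(0,1)$ bijectively onto $(0,1)$ and so misses the value $\phi(\gamma(0)) = 1$.

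Finally I would check upper semi-continuity by verifying that $\{\pi \in G(1,2) : \phi(\pi) \ge \alpha\}$ is closed for every $\alpha \in \mathbb{R}$. This set is all of $G(1,2)$ when $\alpha \le 0$ and is empty when $\alpha > 1$, and for $0 < \alpha \le 1$ it equals $\{\gamma(t) : t \in [\alpha,1)\} \cup \{\gamma(0)\}$, which is the $\gamma$-image of a closed arc of $\mathbb{R}/\mathbb{Z}$ and hence closed. I expect no real obstacle here; the only subtlety is that at the ``seam'' $\gamma(0)$ the two one-sided limits of $t \mapsto t$ are $0$ and $1$, so one is forced to assign $\phi(\gamma(0))$ the larger value $1$ --- and that is exactly the choice that makes the superlevel sets closed there, hence that makes $\phi$ upper semi-continuous. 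With injectivity and upper semi-continuity of $\phi$ in hand, Theorem~\ref{main1} completes the proof.
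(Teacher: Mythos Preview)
Your proof is correct and takes essentially the same approach as the paper: the paper identifies $G(1,2)$ with the half-open interval $(0,\pi]$ (via the angle a line makes with the horizontal) and sets $\phi(x)=x/\pi$, which is the same injective ``identity-type'' map with a single jump at the seam that you construct via $\gamma$. Your version is in fact more carefully written, since you explicitly verify upper semi-continuity by checking that the superlevel sets are closed arcs, whereas the paper leaves this implicit.
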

\begin{proof}
This follows immediately from Theorem \ref{main1} by letting $\phi$ be given by $\phi(x)=x/\pi$ where we identify $G(1,2)$ with the interval $(0,\pi]$ in the natural way such that the projection onto the horizontal axis is identified with $\pi \in (0,\pi]$.
\end{proof}

\section{Construction of $F$ and proof of Theorem \ref{main1}}

\subsection{A quantitative  separability property for functions}

We require the following technical lemma which we state in more generality than we need since it may be of interest in its own right and we could not find it in the literature.  The result should be thought of as a quantitative  separability property for functions.  The upper box dimension, $\overline{\dim}_\text{B} X$, of a metric space $X$ may be defined as the infimum of $s$ such that for all $r>0$, there exists a cover of $X$ by at most $r^{-s}$  balls of radius $r$ centred in $X$.  The upper box dimension may be infinite, but it is finite for any bounded subset of Euclidean space, for example.

\begin{lma} \label{countable-set}
Let $X$ and $Y$ be  metric spaces with finite upper box dimensions. For any function $\phi \colon X \to Y$ there exists a countable dense subset $Q=\{q_1, q_2, \dots\}$ of $X$ such that, given $x \in X$, there exists a sequence $(q_{n_k})_k$ of points in $Q$ for which  $\phi(q_{n_k}) \to \phi(x)$ as $k \to \infty$,  and $q_{n_k} \to x$ with
\[
d_X(q_{n_k},x) \ \leq \ n_k^{-1/s}
\]
where $d_X$ is the metric on $X$ and $s=\overline{\dim}_\textup{B} X+\overline{\dim}_\textup{B} Y+2$.
\end{lma}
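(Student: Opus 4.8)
The plan is to build $Q$ one dyadic scale at a time, choosing at each scale just enough points so that every $x\in X$ has a nearby point of $Q$ whose $\phi$-value is also nearby, while keeping the total count of points introduced up to scale $2^{-j}$ below $2^{j(a+b)}$ for some $a+b$ strictly smaller than $s=\overline{\dim}_\textup{B} X+\overline{\dim}_\textup{B} Y+2$; the enumeration will then be obtained by listing these points scale by scale. Fix $\varepsilon\in(0,1)$ and set $a=\overline{\dim}_\textup{B} X+\varepsilon$ and $b=\overline{\dim}_\textup{B} Y+\varepsilon$, so that $a+b<s$. By definition of the upper box dimension there is an integer $j_1$ such that for every $j\ge j_1$ the space $X$ can be covered by at most $2^{ja}$ balls $B(x^j_i,2^{-j})$ with centres $x^j_i\in X$ and $Y$ by at most $2^{jb}$ balls $B(y^j_l,2^{-j})$ with centres $y^j_l\in Y$; fix such covers, say with $i\le M_j\le 2^{ja}$ and $l\le N_j\le 2^{jb}$.

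For each $j\ge j_1$ and each pair $(i,l)$, $i\le M_j$, $l\le N_j$, for which the set $\{z\in B(x^j_i,2^{-j}):\phi(z)\in B(y^j_l,2^{-j})\}$ is non-empty, choose one point $q^j_{i,l}$ from it. Let $Q$ be the set of all points chosen in this way, over all $j\ge j_1$ and all such admissible pairs; $Q$ is countable. Enumerate $Q$ as $q_1,q_2,\dots$ so that every point selected at scale $j$ is listed before any point that is selected only at scales $j'>j$. Then the number of points first listed at scales $\le j$ is at most $\sum_{j'=j_1}^{j}M_{j'}N_{j'}\le \sum_{j'=j_1}^{j}2^{j'(a+b)}\le C\,2^{j(a+b)}$ for a constant $C=C(a,b)$, since $a+b\ge 2\varepsilon>0$; hence any element of $Q$ first listed at scale $j$ has index at most $C\,2^{j(a+b)}$.

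To verify the conclusion, fix $x\in X$. For each $j\ge j_1$ pick indices $i,l$ with $x\in B(x^j_i,2^{-j})$ and $\phi(x)\in B(y^j_l,2^{-j})$; the pair $(i,l)$ is admissible (witnessed by $x$ itself), so $q^{(j)}:=q^j_{i,l}\in Q$ is defined, and by the triangle inequality $d_X(q^{(j)},x)\le 2^{1-j}$ and $d_Y(\phi(q^{(j)}),\phi(x))\le 2^{1-j}$. In particular $q^{(j)}\to x$ and $\phi(q^{(j)})\to\phi(x)$ as $j\to\infty$, so $Q$ is dense in $X$. If $m_j$ denotes the index of $q^{(j)}$ in the enumeration, then since $q^{(j)}$ is selected at scale $j$ it is first listed at some scale $\le j$, so $m_j\le C\,2^{j(a+b)}$ and therefore
\[
m_j^{-1/s}\ \ge\ C^{-1/s}\,2^{-j(a+b)/s}.
\]
Because $(a+b)/s<1$, the right-hand side exceeds $2^{1-j}$ for all $j$ larger than some $j_0\ge j_1$, and for such $j$ we obtain $d_X(q^{(j)},x)\le 2^{1-j}\le m_j^{-1/s}$. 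Taking $q_{n_k}:=q^{(j_0+k)}$ with $n_k:=m_{j_0+k}$ yields a sequence in $Q$ with $\phi(q_{n_k})\to\phi(x)$, $q_{n_k}\to x$, and $d_X(q_{n_k},x)\le n_k^{-1/s}$, as required.

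The only delicate point is the bookkeeping in the enumeration: the argument works precisely because the number of points introduced at scales $\le j$ grows no faster than $2^{j(a+b)}$ with $a+b$ bounded away from $s$, which is exactly what the term $\overline{\dim}_\textup{B} X+\overline{\dim}_\textup{B} Y$ in the definition of $s$ supplies, while the additive constant $2$ (as opposed to merely $+\varepsilon$ for arbitrarily small $\varepsilon$) furnishes the uniform slack needed to absorb the constant $C$ and the factor $2$ coming from the triangle inequality, so that the comparison $2^{1-j}\le m_j^{-1/s}$ holds for all large $j$ at once.
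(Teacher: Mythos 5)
Your proof is correct and follows essentially the same strategy as the paper's: cover at a decreasing sequence of scales using roughly $r^{-(\overline{\dim}_\textup{B}X+\overline{\dim}_\textup{B}Y+o(1))}$ pieces, enumerate the chosen points scale by scale, and use the resulting bound on the index together with the slack provided by the ``$+2$'' in $s$ to get $d_X(q_{n_k},x)\le n_k^{-1/s}$. The only cosmetic difference is that the paper covers $\mathrm{Graph}(\phi)\subseteq X\times Y$ directly by balls of radius $1/k$ and projects the centres to $X$, whereas you cover $X$ and $Y$ separately at dyadic scales and pick a witness in each non-empty product cell; this amounts to unfolding the paper's one-line appeal to $\overline{\dim}_\textup{B}\mathrm{Graph}(\phi)\le\overline{\dim}_\textup{B}X+\overline{\dim}_\textup{B}Y$.
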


\begin{proof}
Consider the set $\text{Graph}(\phi) := \{(x,\phi(x)) : x \in X\} \subseteq X \times Y$, where the product is equipped with the sup metric $d_{\infty}$ and note that an easy argument shows $\overline{\dim}_\textup{B} \text{Graph}(\phi) \leq \overline{\dim}_\textup{B} X+\overline{\dim}_\textup{B} Y<s-1$. For integer $k \geq 1$ let $Q'_k \subseteq \text{Graph}(\phi) $ be the set of centres of balls of radius $1/k$ in a cover of  $\text{Graph}(\phi)$ with  $\# Q_k' \leq k^{s-1}$.  The set $Q' := \bigcup_{k \geq 1} Q'_k$  is a countable dense subset of   $\text{Graph}(\phi)$.  Moreover, if $p_X \colon X \times Y \to X$ is the canonical projection, then let $Q=p_X(Q')=\{q_1, q_2, \dots\}$ where the ordering is such that points in $p_X(Q'_k)$ are labelled before points in $p_X(Q'_{k+1})$ for all $k \geq 1$ (up to multiplicity).  The set $Q$ (with the given labelling) satisfies the requirements of the lemma.  Clearly $Q$ is dense in $X$ and, moreover, given $x \in X$ and $k \geq 1$, choose  $q_{n_k} =  p_X(q'_k)$ for some $q_k' \in Q_k'$ such that $d_{\infty}(q'_{n_k},(x,\phi(x))) \leq1/k$. It follows that
\[
 d_Y(\phi(q_{n_k}), \phi(x)) \ \leq \ 1/k \ \to \ 0 
\]
as $k \to \infty$, where $d_Y$ is the metric on $Y$.  Moreover, by the above labelling procedure,
\[
n_k \ \leq \ \sum_{m=1}^k \# Q_m' \ \leq \ \sum_{m=1}^k  m^{s-1} \ \leq \ k^{s}
\]
and therefore
\[
d_X(q_{n_k},x) \ \leq \ 1/k \leq n_k^{-1/s}
\]
as required.
\end{proof}

\subsection{Construction of $F$}

From now on, we fix  an upper semi-continuous function $\phi \colon G(1,2) \to [0,1]$.  Let $\Pi = \{\pi_1, \pi_2, \dots\}$ be a countable dense subset of $G(1,2)$ satisfying the requirements of Lemma \ref{countable-set}. For each $\pi_k\in \Pi$ and integer $n \geq 1$, we construct a finite set $F_{\pi_k,n}$ associated to $\pi_k$ as follows.  Let $s=\phi(\pi_k)$ and choose $c \in (0,1/2]$  such that $\log2/\log(1/c) = s$ provided $s>0$ and $c=0$ otherwise.  Let
$S_0, S_1 \colon [0,1]\to [0,1]$ be defined by $S_0(x) = cx$ and $S_1(x) = cx+1-c$ and let
\[
E_n(c) \ = \ \{ S_{i_1} \circ \dots \circ S_{i_n} (0) \ : \ i_j \in \{0,1\} \} \ \subset \ [0,1]
\]
which is a finite approximation of the self-similar set generated by $\{S_0,S_1\}$.  We assume a basic familiarity with self-similar sets; see for example \cite[Chapter 9]{falconer}.  Note that $E_n(c)$ consists of $2^n$ points, provided $c>0$, and 2 points if $c=0$.   If $c>0$, let
\[
Z_n \ = \ \{ 2^{-i} \ : \ i=1, 2, \dots, 2^{n+1}-2\}
\]
and otherwise let $Z_n=\{0,1\}$.   If $c>0$, let
\[
Y_n (c) \ = \ \bigcup_{m=1}^n \left( 9^{-2^m}+16^{-2^m}E_m(c)\right)
\]
and otherwise let $Y_n (c) =\{0,1/2\}$.  Note that if we write     $Y_n(c) = \{y_1, y_2, \dots\}$ where the points are labelled in decreasing order then we have
\begin{equation} \label{yydecay}
y_i \ \leq \ 3^{-i}+4^{-i}.
\end{equation}
Observe that $Z_n$ and $Y_n(c)$ are finite sets with the same cardinality.  Let $\iota \colon Y_n(c) \to Z_n$ be the unique increasing  bijection between $Y_n(c)$ and $Z_n$ and let
\[
F'_n \ = \ \{(y, \iota(y) ) : y \in Y_n(c)\}.
\]
Note that $F'_n \subseteq Y_n(c) \times Z_n$,  $\pi^1 F'_n = Y_n(c)$ and $\pi^2 F'_n = Z_n$, where $\pi^i$ denotes projection onto the $i$th coordinate.   The idea here is that $\pi^1 F'_n = Y_n(c)$ is  approximating a set which itself is approximating a  self-similar set of dimension $s$ and  $\pi^2 F'_n = Z_n$ is approximating the set 
\[
Z \ = \ \{0\} \cup \bigcup_n Z_n \ = \ \{0\} \cup \{ 2^{-i} : i=1, 2, \dots\}
\]
 which has Assouad dimension $0$. Specifically, the sets $Y_n(c)$  are a nested increasing sequence of sets approximating the set
\[
Y(c) \ = \ \bigcup_{n \geq 1} Y_n(c)
\]
which has Assouad dimension $s$.  We now want to force the projections of $F'_n$ to also appear $0$-dimensional for projections  distinct from $\pi^1$ and $\pi^2$, which is achieved by `stretching', and to re-align $F_n'$ such that the subspace corresponding to the  large projection (in terms of dimension)  is $\pi_k$ instead of $\pi^1$, which is achieved by rotating. Finally, we shall place the stretched and rotated versions of $F'_{n}$ on the graph of $x \mapsto x^2$ in such a way that their projections (onto any subspace) do not overlap with each other too much. To this end, let  $h$ and $v$ be decreasing functions from $\mathbb{N}$ to $(0,1)$ chosen such that  
\[
h(i) \ = \ \frac{v(i)}{\log i}  \qquad \text{and} \qquad \frac{v(i)}{10^{-i}} \ \to \ 0.
\]
Let $g \colon \Pi \times \mathbb{N} \to \mathbb{N}$ be a bijection satisfying $  n \leq g(\pi_k, n) \leq \max\{n,k\}^2$ for all $(\pi,n) \in\Pi \times \mathbb{N}$.  Such a $g$ is easily constructed, recall for example the standard enumeration of the positive rationals.
\begin{figure}[t]
\centering
\begin{tikzpicture}[scale=14]
  % xy-coords
  \begin{scope}[font=\scriptsize]
    \draw [->] (-0.01,0) -- (0.6,0) node [] {};
    \draw [->] (0,-0.01) -- (0,0.3) node [] {};

    \draw (0.0078125,0.005) -- (0.0078125,-0.005);
    \draw (0.015625,0.005) -- (0.015625,-0.005)   node [below] {$\cdots$};
    \draw (0.03125,0.005) -- (0.03125,-0.005);
    \draw (0.0625,0.005) -- (0.0625,-0.005)   node [below] {$2^{-4}$};
    \draw (0.125,0.005) -- (0.125,-0.005)   node [below] {$2^{-3}$};
    \draw (0.25,0.005) -- (0.25,-0.005)   node [below] {$2^{-2}$};
    \draw (0.5,0.005) -- (0.5,-0.005)   node [below] {$2^{-1}$};

    \draw (0.005,0.25) -- (-0.005,0.25)   node [left] {$4^{-1}$};
    \draw (0.005,0.0625) -- (-0.005,0.0625)   node [left] {$4^{-2}$};
    \draw (0.005,0.015625) -- (-0.005,0.015625)   node [left] {$4^{-3}$};
    \draw (0.005,0.00390625) -- (-0.005,0.00390625);
  \end{scope}

  % parabola
  \draw [dotted,domain=0:0.5,smooth,variable=\x] plot ({\x},{(\x)*(\x)});

  % rotated rectangles
  \draw [xshift=0.5 cm,yshift=0.25 cm,rotate=30,scale=0.06] (0,0) rectangle (1,3);
  \draw [xshift=0.25 cm,yshift=0.0625 cm,rotate=-20,scale=0.03] (0,0) rectangle (1,4);
  \draw [xshift=0.125 cm,yshift=0.015625 cm,rotate=10,scale=0.01] (0,0) rectangle (1,5);
  \draw [xshift=0.0625 cm,yshift=0.00390625 cm,rotate=25,scale=0.004] (0,0) rectangle (1,6);

  \end{tikzpicture}
  \caption{Illustration for the construction of $F$. Here each rectangle represents one of the sets  $F_{\pi,n}$. Note that, for illustrative purposes, the rectangles are not to scale.}
  \label{fig:illustration}
\end{figure}
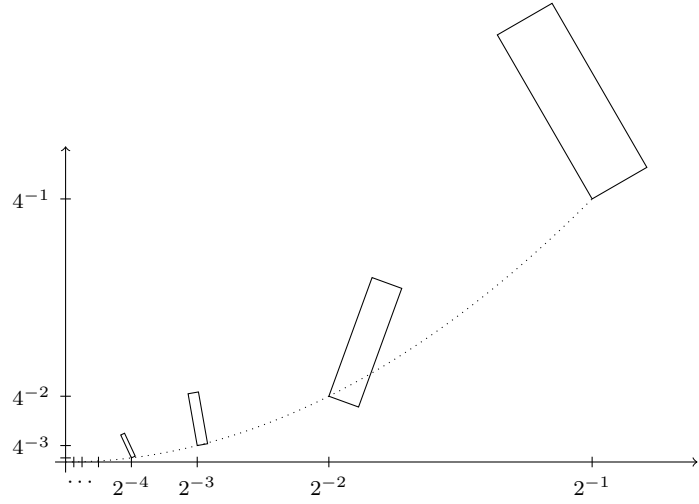
Let
\begin{equation} \label{eq:F-def}
F_{\pi,n} \ = \ R_\pi  \circ A_{g(\pi,n)} ( F_n')+t_{g(\pi,n)}
\end{equation}
where $A_{i}$ is the linear map which scales by $h(i)$ in the horizontal direction and $v(i)$ in the vertical direction,  $R_\pi$ is the rotation about the origin so that the base of $R_\pi([0,1]^2)$ is parallel with $\pi$, and $t_{i} = (2^{-i}, 4^{-i})$ is a translation.  Finally, let
\begin{equation*}
  F \ = \ \{(0,0)\} \cup \bigcup_{(\pi,n) \in \Pi \times \mathbb{N}}  F_{\pi,n}
\end{equation*}
and notice that, by construction, the set $F$ is  compact.

\subsection{Some preliminaries: covering estimates and weak tangents }

Before showing that the set $F$ satisfies the properties claimed in Theorem \ref{main1}, we provide the reader with some preliminaries.

\begin{lma} \label{coveringFk}
For all $\eps>0$ there exists a constant $C_\eps \ge 1$ such that for every $\pi\in \Pi$, $n \geq 1$, $x \in \mathbb{R}^2$, and $0<r<R$ we have $N_r(B(x,R) \cap F_{\pi,n}) \leq C_\eps (R/r)^\eps$.
\end{lma}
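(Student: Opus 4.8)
The plan is to pass to a normalised model set and then exploit two facts about it: its vertical marginal has Assouad dimension zero, and, by the decay estimate \eqref{yydecay}, the horizontal displacement of each of its points from the vertical axis is superlinearly small in terms of that point's height.

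First I would observe that $R_\pi$ is a rotation and $t_{g(\pi,n)}$ a translation, so $F_{\pi,n}$ is an isometric image of $E:=A_{g(\pi,n)}(F_n')$; since the quantities $N_r(B(x,R)\cap\,\cdot\,)$ are invariant under planar isometries it suffices to bound $N_r(B(x,R)\cap E)$ by $C_\eps(R/r)^\eps$ for all $x\in\mathbb R^2$ and $0<r<R$. Writing $i=g(\pi,n)$ and enumerating $Y_n(c)=\{y_1>y_2>\cdots\}$, the definition of $\iota$ gives, for $c>0$,
\[
E \ = \ \{\,(h(i)y_l,\ v(i)2^{-l}) \ : \ 1\le l\le 2^{n+1}-2\,\}
\]
(the case $c=0$ is trivial, $E$ then having two points). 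The two facts are then: (a) the heights $v(i)2^{-l}$ are pairwise distinct, so $E$ is a graph over its vertical projection, which lies in $v(i)Z$ with $Z=\{0\}\cup\{2^{-j}:j\ge1\}$; as $\ad Z=0$ and $Z$ is bounded, $N_r\big(B(x,R)\cap(\{0\}\times v(i)Z)\big)\le C_\eps(R/r)^\eps$ with $C_\eps$ absolute; and (b) by \eqref{yydecay}, the point of $E$ at height $v(i)2^{-l}$ has horizontal coordinate $h(i)y_l\le 2h(i)\,3^{-l}=2h(i)\big(v(i)2^{-l}/v(i)\big)^{\log_2 3}$, with $\log_2 3>1$.

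Next I would dispose of the trivial scale ranges ($r\ge 2v(i)\ge\diam E$ forces $N_r\le1$; and since $E$ lies in a ball of radius $2v(i)$ about any of its points, $R>2v(i)$ reduces to $R=2v(i)$) and assume $r<R\le 2v(i)$ and that $B(x,R)$ meets $E$ in at least two points. Comparing $\diam B(x,R)=2R$ with $|P_l-P_{l'}|\ge\tfrac12 v(i)2^{-\min(l,l')}$ (valid because $l\neq l'$) shows every point of $B(x,R)\cap E$ has height at most $4R$. The main step is to split $E=E_{\mathrm{low}}\cup E_{\mathrm{high}}$ according to whether $h(i)y_l\le r$ or $h(i)y_l>r$. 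Each point of $E_{\mathrm{low}}$ lies within $r$ of the vertical axis, so $B(x,R)\cap E_{\mathrm{low}}$ is contained in an $r$-neighbourhood of $(\{0\}\times v(i)Z)\cap B(x,R+r)$, and fact (a) bounds its covering number by $C_\eps(R/r)^\eps$. For $E_{\mathrm{high}}$ (empty unless $r<2h(i)$), fact (b) forces $h(i)y_l>r\Rightarrow l<\log_3(2h(i)/r)\Rightarrow v(i)2^{-l}>\zeta:=v(i)\big(r/(2h(i))\big)^{\log_3 2}$, so the points of $B(x,R)\cap E_{\mathrm{high}}$ have pairwise distinct heights in $(\zeta,4R]$, of which there are at most $\log_2(4R/\zeta)+1$. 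The elementary estimate $4R/\zeta\le 8R/r$ — which follows from $r<2v(i)$, $h(i)\le v(i)$ and $1-\log_3 2>0$ — then gives $N_r(B(x,R)\cap E_{\mathrm{high}})\le\log_2(8R/r)+1\le C_\eps(R/r)^\eps$, using $\log u\le C_\eps u^{\eps}$ for $u\ge1$. Adding the two contributions finishes the proof.

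I expect the $E_{\mathrm{high}}$ estimate to be the only point needing care, so it is worth recording why it is true: a ball could in principle engulf a whole block $9^{-2^m}+16^{-2^m}E_m(c)$, which carries the dimension-$\log2/\log(1/c)$ behaviour of the underlying self-similar set; but the matching block of $Z_n$ spreads doubly-exponentially, over a range of ratio $\sim 2^{2^m}$, so whenever that block is resolved at scale $r$ the ratio $R/r$ is forced to be at least that large, drowning the $2^m$ points against $(R/r)^{\eps}$ — and the exponent $\log_2 3>1$ coming from \eqref{yydecay} is precisely the slack that makes the clean computation above valid. The scale reductions and the low part are routine, and the only properties of $h$ and $v$ used are their positivity and $h(i)\le v(i)$.
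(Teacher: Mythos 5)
Your proof is correct, but it follows a genuinely different route from the paper's. The paper makes a single global observation: comparing the vertical gaps $(2^{-l}-2^{-l'})v(i)\ge v(i)2^{-l-1}$ (for $l<l'$) with the horizontal spread $(3^{-l}+4^{-l})h(i)\le 2v(i)3^{-l}$ coming from \eqref{yydecay}, the ratio $4(2/3)^{l}$ is bounded, so the projection $\pi^{\perp}$ onto the long side of the rectangle is bi-Lipschitz on $F_{\pi,n}$ with constants independent of $\pi$ and $n$; the covering bound is then inherited in one line from $\ad Z=0$, since $\pi^{\perp}F_{\pi,n}$ is a rescaled subset of $Z$. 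You instead decompose at the scale $r$: the points within horizontal distance $r$ of the long axis are handled by an $r$-neighbourhood comparison with $\{0\}\times v(i)Z$ (again via $\ad Z=0$), and the outliers are counted directly, the exponent $\log_{2}3>1$ extracted from \eqref{yydecay} guaranteeing there are only $O(\log(R/r))$ of them. Both arguments rest on exactly the same geometric input ($3^{-l}$ horizontal decay against $2^{-l}$ vertical spacing); the paper's version is shorter and yields the stronger structural conclusion that each $F_{\pi,n}$ is uniformly bi-Lipschitz equivalent to a subset of a rescaled copy of $Z$, whereas yours trades that for a two-case split plus a logarithmic count. One shared blemish: your reliance on $h(i)\le v(i)$ (like the paper's implicit use of $\log g(\pi,n)\ge 1$) fails for $i\le 2$, where $h(i)=v(i)/\log i$ exceeds $v(i)$ or is undefined; this affects only finitely many $(\pi,n)$ and is harmless, but strictly speaking your closing claim that positivity and $h(i)\le v(i)$ are the only properties of $h$ and $v$ used needs that caveat.
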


\begin{proof}
Fix $\eps>0$, and let $C'_\eps \ge 1$ be such that for all $z \in \mathbb{R}$ and $0<r<R$ we have
\begin{equation}\label{coveringZ}
N_r(B(z,R) \cap Z) \ \leq \ C_\eps'(R/r)^\eps.
\end{equation}
This can be achieved since $\ad Z = 0$.  Fix $\pi \in \Pi$ and $n \geq 1$, and let $\pi^\perp$ denote the orthogonal complement of $\pi$.   For distinct $x,y \in F_{\pi,n}$, note that
\[
|\pi^\perp(x)-\pi^\perp(y)| \ = \ (2^{-i}-2^{-j})v(g(\pi,n)) \ \geq \ \frac{v(g(\pi,n))}{2^{i+1}}
\]
for some integers $j>i\geq 1$. Therefore, using \eqref{yydecay}, we also have
\[
|\pi(x)-\pi(y)| \ \leq \ (3^{-i}+4^{-i}) h(g(\pi,n)) \ = \ (3^{-i}+4^{-i}) \frac{v(g(\pi,n))}{\log(g(\pi,n))} \ \leq \ 2 \frac{v(g(\pi,n))}{3^i}.
\]
This shows that $\pi^\perp$ is a bi-Lipschitz map on $F_{\pi,n}$ with lower Lipschitz constant bounded away from $0$  independently of $\pi$ and $n$  and upper Lipschitz constant trivially bounded above by 1.  Therefore there is a uniform constant $C_0 \geq 1$ such that for all $r>0$ and any subset $F' \subseteq F_{\pi,n}$ we have $N_r(F') \leq C_0 N_r(\pi^\perp F')$. Note that  $\pi^\perp F_{\pi,n}$ is a subset of $Z$ scaled down by a factor of $v(g(\pi,n))$.  It follows that for arbitrary $x \in \mathbb{R}^2$ and $0<r<R$ we have
\begin{eqnarray*}
N_r(B(x,R) \cap F_{\pi,n}) &\leq& C_0 N_r(B(\pi^\perp(x),R) \cap \pi^\perp F_{\pi,n}) \\ 
&\leq& C_0 \sup_{z \in \mathbb{R}} N_{ r/v(g(\pi,n))} (B(z, R/v(g(\pi,n))) \cap Z ) \\
&\leq&  C_0 C_\eps' (R/r)^{\eps} \qquad \text{by \eqref{coveringZ}}
\end{eqnarray*}
which proves the claim.
\end{proof}

Let $\mathcal{K}(\mathbb{R})$ denote the set of all compact subsets of $ \mathbb{R}$, which is a complete metric space when equipped with the \emph{Hausdorff metric} $d_\mathcal{H}$ defined by
\[
d_\mathcal{H} (A,B) \ = \ \inf \{ \delta \ : \ A \subseteq B_\delta \text{ and } B \subseteq A_\delta \}
\]
where, for any $C \in \mathcal{K}(\mathbb{R})$,
\[
C_\delta \ = \ \{ x \in \mathbb{R} \ : \ | x- y | < \delta \text{ for some } y \in C \}
\]
denotes the open $\delta$-neighbourhood of $C$. One of the most effective ways to bound the Assouad dimension of a set from below is to use  \emph{weak tangents}; an approach going back to Mackay and Tyson \cite{mackaytyson} and Keith and Laakso \cite{keithlaakso}. Let $F \subset \mathbb{R}$ and $E \subseteq B(0,1)$ be compact.  If there exists a sequence of similarity maps $T_k$ on $\mathbb{R}$ such that $d_\mathcal{H} (E,T_k(F) \cap B(0,1) ) \to 0$ as $k \to \infty$, then $E$ is called a \emph{weak tangent} to $F$.

\begin{prop}\label{tangents}
If $F \subseteq \mathbb{R}$ is compact and  $E$ is a weak tangent to $F$, then $\dim_\mathrm{A} F \geq \dim_\mathrm{A} E$. 
\end{prop}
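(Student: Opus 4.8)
The plan is to use the definition of Assouad dimension directly and transfer covers from $F$ to $E$ via the similarity maps. Fix $s < \dim_{\mathrm{A}} E$; we want to show $s \le \dim_{\mathrm{A}} F$, which by the definition of Assouad dimension means that for every $C > 0$ there exist $R > 0$ and $r \in (0,R)$ witnessing failure of the covering inequality, i.e. $\sup_{x \in F} N_r(B(x,R) \cap F) > C(R/r)^s$. Since $s < \dim_{\mathrm{A}} E$, there is some scale pair for $E$ where the covering number genuinely exceeds a large constant multiple of $(R/r)^s$; the job is to pull this bad behaviour back to $F$ using the approximating similarities $T_k$.

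First I would set up notation: suppose $s < \dim_{\mathrm{A}} E$ and pick $t$ with $s < t < \dim_{\mathrm{A}} E$. By the definition of $\dim_{\mathrm{A}} E$ there exist $x_0 \in E$ and radii $0 < \rho < \sigma$ (we may take $\sigma = 1$ after shrinking, since $E \subseteq B(0,1)$) such that $N_\rho(B(x_0,\sigma) \cap E)$ is as large as we like relative to $(\sigma/\rho)^t$; more precisely, for any constant we can find such a configuration. Then, using $d_\mathcal{H}(E, T_k(F) \cap B(0,1)) \to 0$, for $k$ large the set $T_k(F) \cap B(0,1)$ is within a tiny Hausdorff distance $\delta_k \to 0$ of $E$. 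A standard comparison shows that a cover of $B(x_0, \sigma + \delta_k) \cap T_k(F)$ by sets of diameter $\le \rho$ induces, after fattening by $\delta_k$ and noting $\delta_k \ll \rho$, a cover of $B(x_0,\sigma) \cap E$ by sets of diameter $\le \rho + 2\delta_k \le 2\rho$, say; hence $N_{2\rho}(B(x_0,\sigma)\cap E) \le N_\rho(B(x_0,\sigma+\delta_k) \cap T_k(F))$ up to harmless constant factors. Combined with a converse fattening argument this gives, for large $k$, a lower bound of the form $N_{\rho/2}(B(z_k, \sigma) \cap T_k(F)) \gtrsim N_\rho(B(x_0,\sigma)\cap E)$ for a suitable centre $z_k \in T_k(F)$.

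Next, since $T_k$ is a similarity, say with ratio $\lambda_k > 0$, covering numbers transform exactly: $N_{\rho/2}(B(z_k,\sigma) \cap T_k(F)) = N_{\rho/(2\lambda_k)}(B(T_k^{-1}(z_k), \sigma/\lambda_k) \cap F)$. Writing $R_k = \sigma/\lambda_k$ and $r_k = \rho/(2\lambda_k)$, we have $r_k < R_k$ and $R_k/r_k = 2\sigma/\rho$ is a fixed ratio, so the lower bound on covering numbers of $E$ at a single scale pair $(\rho,\sigma)$ translates into a lower bound $\sup_{x \in F} N_{r_k}(B(x, R_k) \cap F) \gtrsim N_\rho(B(x_0,\sigma)\cap E) > C (R_k/r_k)^s$ for any prescribed $C$, provided we chose the initial configuration in $E$ with enough points — which we can, because $t > s$ means $(\sigma/\rho)^t$ can be made arbitrarily large compared to $C(\sigma/\rho)^s$ by taking $\rho$ small. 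This contradicts $s \ge \dim_{\mathrm{A}} F$, so $\dim_{\mathrm{A}} F \ge s$ for all $s < \dim_{\mathrm{A}} E$, and letting $s \to \dim_{\mathrm{A}} E$ finishes the proof.

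The main obstacle is the bookkeeping in the Hausdorff-distance comparison: one must be careful that the error $\delta_k$ from $d_\mathcal{H}(E, T_k(F) \cap B(0,1))$ is small relative to the scale $\rho$ one is working at, which is fine because $\rho$ is fixed first and then $k$ is chosen large, and also that points and balls centred in $E$ versus in $T_k(F)$ can be matched up within $\delta_k$ without losing more than a constant factor in covering numbers. A minor subtlety is that $T_k(F) \cap B(0,1)$ rather than $T_k(F)$ appears, so one should keep $\sigma$ slightly less than $1$ (or work with $\sigma = 1 - \delta_k$) to ensure the relevant balls stay inside $B(0,1)$ where the approximation is valid. None of this is deep, but it is the part requiring genuine care; the transfer under similarities and the final "choose enough points" step are routine.
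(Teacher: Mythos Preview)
Your argument is correct and is essentially the standard proof of this fact. The paper itself does not give a proof at all: it simply cites \cite[Proposition 6.1.5]{mackaytyson}. What you have written is, in outline, precisely the Mackay--Tyson argument --- transfer a bad covering configuration from $E$ to $T_k(F)\cap B(0,1)$ via the Hausdorff-distance approximation (losing only constant factors), then pull it back to $F$ by the similarity $T_k^{-1}$.

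One small remark on your bookkeeping: when you write ``$(\sigma/\rho)^t$ can be made arbitrarily large compared to $C(\sigma/\rho)^s$ by taking $\rho$ small'', you are implicitly using that the ratio $\sigma/\rho$ in the bad configuration can be forced to be large. This is true, but needs a one-line justification: since $E\subseteq\mathbb{R}$, one always has $N_\rho(B(x_0,\sigma)\cap E)\le 2\sigma/\rho+2$, so if $N_\rho(B(x_0,\sigma)\cap E)>C'(\sigma/\rho)^t$ with $C'$ large then $\sigma/\rho$ is forced to be large. Alternatively (and more simply) you can dispense with the auxiliary exponent $t$ entirely and just absorb the constant losses from the Hausdorff comparison and the factor-of-$2$ changes in radii directly into the choice of $C'$, since the failure of the Assouad covering condition at exponent $s$ already allows arbitrary $C'$.
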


The proof of this proposition can be found in \cite[Proposition 6.1.5]{mackaytyson}. We also have the following dual result which shows that the Assouad dimension can always be achieved as the Hausdorff dimension of a weak tangent.

\begin{prop}\label{tangents2}
If $F \subseteq \mathbb{R}$ is compact, then there exists a weak tangent $E$ to $F$ such that $ \dim_\mathrm{H} E = \dim_\mathrm{A} E = \dim_\mathrm{A} F$. 
\end{prop}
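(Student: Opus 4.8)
The plan is to obtain the chain of equalities $\dim_{\mathrm H}E=\dim_{\mathrm A}E=\dim_{\mathrm A}F$ by proving the two relevant inequalities from opposite directions. For \emph{any} weak tangent $E$ of $F$, Proposition~\ref{tangents} together with the standard bound $\dim_{\mathrm H}\le\dim_{\mathrm A}$ gives $\dim_{\mathrm H}E\le\dim_{\mathrm A}E\le\dim_{\mathrm A}F=:d$. It therefore suffices to exhibit a single weak tangent $E$ with $\dim_{\mathrm H}E\ge d$: for that $E$ all three quantities must then agree. If $d=0$ there is nothing to prove, since every weak tangent then works and at least one exists -- the sets $T_k(F)\cap B(0,1)$ for the similarities $T_k(y)=k(y-p)$ (with $p\in F$ fixed) are non-empty compact subsets of $B(0,1)$, and this space is $d_{\mathcal H}$-compact, so a subsequence converges. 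Assume henceforth that $d>0$.

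The key step -- and essentially the only substantial one -- is to extract from the definition of the Assouad dimension a family of genuinely ``fat'' rescaled pieces of $F$. I would prove that there is a constant $C\ge1$ such that for every $j\in\mathbb N$ one can find $x_j\in F$, radii $R_j>r_j>0$, and a Borel probability measure $\mu_j$ supported on $F\cap B(x_j,R_j)$, so that, writing $\eps_j=1/j$ and $\delta_j=r_j/R_j$, one has $\delta_j\to0$ and
\[
\mu_j\big(B(y,\rho)\big)\ \le\ C\,(\rho/R_j)^{\,d-\eps_j}\qquad\text{for all }y\in\mathbb R\text{ and all }r_j\le\rho\le R_j.
\]
To build $\mu_j$ one iterates the failure of the defining inequality at the exponent $d-\eps_j$: this failure holds for \emph{every} multiplicative constant, so $F$ contains a ball $B(x_j,R_j)$ on which it is far fatter than $(R_j/r_j)^{d-\eps_j}$ at scale $r_j$; one then descends through the intermediate scales, at each stage retaining only the sub-cells through which the surplus fatness propagates and re-invoking the definition inside the survivors to replenish it, and finally distributes unit mass evenly along the resulting pruned tree. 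Arranging the pigeonholing so that the branching stays uniformly large at \emph{every} scale and \emph{every} location, with a Frostman constant that does not deteriorate as the dynamic range $R_j/r_j$ or the index $j$ grows, is exactly where the hypothesis ``$d$ is the Assouad dimension of $F$'' (i.e.\ fatness available at arbitrarily many scales simultaneously) is genuinely used, and it is the main obstacle; everything after this step is routine.

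Granting the lemma, let $T_j$ be the similarity $y\mapsto(y-x_j)/R_j$, put $G_j:=T_j(F)\cap B(0,1)=T_j\big(F\cap B(x_j,R_j)\big)$, and let $\nu_j:=(T_j)_*\mu_j$, a probability measure on $G_j$ with $\nu_j(B(y,\rho))\le C\rho^{\,d-\eps_j}$ for $\delta_j\le\rho\le1$. By $d_{\mathcal H}$-compactness of the compact subsets of $B(0,1)$ and weak-$*$ compactness of the probability measures on $B(0,1)$, pass to a subsequence along which $G_j\to E$ in $d_{\mathcal H}$ and $\nu_j\to\nu$ in the weak-$*$ sense. Then $0\in E$; since each $G_j$ has the form $T_j(F)\cap B(0,1)$, the relation $G_j\to E$ exhibits $E$ as a weak tangent of $F$; and $\nu$ is a probability measure with $\operatorname{supp}\nu\subseteq E$, because $\nu_j(U)=0$ for all large $j$ whenever $U$ is open with $\overline U\cap E=\varnothing$. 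Finally, fix $\rho\in(0,1)$; since $\delta_j<\rho$ for all large $j$, the portmanteau theorem applied to the open ball $B(y,\rho)$ yields
\[
\nu\big(B(y,\rho)\big)\ \le\ \liminf_{j\to\infty}\nu_j\big(B(y,\rho)\big)\ \le\ \liminf_{j\to\infty}C\,\rho^{\,d-\eps_j}\ =\ C\,\rho^{\,d},
\]
uniformly in $y\in\mathbb R$. The mass distribution principle now gives $\mathcal H^{d}(E)\ge\nu(E)/C>0$, hence $\dim_{\mathrm H}E\ge d$, which completes the argument.
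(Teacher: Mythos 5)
The paper does not actually prove this proposition: it quotes it from the literature, via \cite[Theorem 5.1]{Fu} combined with \cite[Proposition 3.13]{KR}, or directly from \cite[Proposition 5.7]{KOR}. Your outline is the standard direct argument underlying those citations: reduce, via Proposition \ref{tangents} and the inequality $\dim_{\mathrm H}\le\dim_{\mathrm A}$, to exhibiting one weak tangent of Hausdorff dimension at least $d:=\dim_{\mathrm A}F$; construct Frostman-type measures with exponents $d-\eps_j$ and a constant uniform in $j$ on rescaled balls of $F$; pass to a weak-$*$ limit and apply the mass distribution principle. Everything downstream of your key lemma --- compactness of the hyperspace of $B(0,1)$, identification of the limit as a weak tangent, the support of the limit measure, the portmanteau inequality for open balls, the $d=0$ case --- is correct and, as you say, routine.

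The gap is the key lemma itself, which you state but do not prove, and the sketch you give for it does not resolve the genuine difficulty. Knowing that $N_{r_j}(B(x_j,R_j)\cap F)$ greatly exceeds $(R_j/r_j)^{d-\eps_j}$ is information about the two extreme scales only; it does not by itself yield uniform branching at the intermediate scales, which is what a Frostman bound with constant independent of $j$ and of the dynamic range $R_j/r_j$ amounts to. Your ``descend, retain the sub-cells through which the surplus propagates, and replenish'' step faces a dichotomy: retaining one best child per node produces a chain (hence a countable, zero-dimensional limit), while retaining $\approx 2^{d-\eps_j}$ children per node with comparable mass presupposes that the covering number is not concentrated in one or two children --- and that can genuinely fail across individual bands of scales, since only the product of the branching factors over all bands is controlled. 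Moreover, the available surplus is bounded ($N_r(B(x,R)\cap F)\le C_0(R/r)$ in $\mathbb{R}$), so it cannot absorb a per-level multiplicative loss compounded over $\log_2(R_j/r_j)\to\infty$ levels. The missing idea is a pigeonhole over scales: one must locate a sub-window of dyadic scales, still of unbounded depth, and a sub-ball on which \emph{every} node branches at rate at least $2^{d-\eps}$, after which the Frostman constant is governed by separation alone and is uniform in $j$. That Furstenberg-type tree lemma is precisely the content of \cite[Proposition 3.13]{KR} and \cite[Proposition 5.7]{KOR}; as written, your argument reduces the proposition to the statement the paper cites rather than proving it.
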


This statement follows from \cite[Theorem 5.1]{Fu} and \cite[Proposition 3.13]{KR}, or, alternatively, directly from \cite[Proposition 5.7]{KOR}.

\subsection{Proof of Theorem \ref{main1}}

Let us first show that $\ad F = 0$.
Fix $x \in F$ and $0<r<R \leq 1$, and let $m:=\min\{ g(\pi,n) : B(x,R) \cap F_{\pi,n}  \neq \emptyset\}$.  Note that we may assume $R \leq 1$ since $F$ is bounded.  Suppose first that $B(x,R)$ only intersects one of the sets $F_{\pi,n}$, which is therefore necessarily the one associated to $m$.  It follows from Lemma \ref{coveringFk} that
\[
N_r(B(x,R) \cap F) \ \leq \ N_r(B(x,R) \cap F_{\pi,n}) \ \leq \ C_\eps (R/r)^\eps.
\]
Now suppose that $B(x,R)$  intersects more than one  of the sets $F_{\pi,n}$. In particular, it must intersect $F_{\pi,n}$ for some $g(\pi,n)>m$.  This forces
\[
 2^{-m-3} \ \leq \ R \ \leq \ 2^{-m+2}
\]
and therefore\begin{equation} \label{mRbound}
\frac{-\log R}{\log 2} - 3 \ \leq \ m \ \leq \ \frac{-\log R}{\log 2} + 2.
\end{equation}
We have 
\begin{eqnarray*}
N_r(B(x,R) \cap F)  & = &  N_r \left(B(x,R) \cap \bigcup_{g(\pi,n)=m}^\infty F_{\pi,n}\right) \\ \\ 
& \leq & N_r \left(B(x,R) \cap \bigcup_{g(\pi,n)=\lceil\frac{-\log r}{\log 2}\rceil}^\infty F_{\pi,n}\right) + \sum_{g(\pi,n)=m}^{\lfloor\frac{-\log r}{\log 2}\rfloor} N_r(B(x,R) \cap F_{\pi,n})   \\ \\ 
& \leq & 2+ \sum_{g(\pi,n)=m}^{\lfloor\frac{-\log r}{\log 2}\rfloor} C_\eps \left( \frac{R}{r} \right)^\eps  \qquad \text{by Lemma \ref{coveringFk}} \\ \\ 
& \leq & 2 + \left( \frac{\log R/r}{\log 2} + 4 \right) C_\eps \left( \frac{R}{r} \right)^\eps  \qquad \text{by \eqref{mRbound}} \\ \\
&\leq& (6 C_\eps+\tilde C_\eps )\left( \frac{R}{r} \right)^{2\eps}
\end{eqnarray*}
where $\tilde C_\eps $ is a constant depending only on $\eps$, such that $C_\eps\log x/\log 2 \leq \tilde C_\eps x^\eps$ for all $x \geq 1$.  We have proved that $\ad F \leq 2\eps$ and since $\eps>0$ is arbitrary, we get $\ad F = 0$ as required.

%%%%%%

We now prove that $\ad \pi F \ge \phi(\pi)$ for all $\pi \in G(1,2)$. Fix $\pi \in G(1,2)$ and let $s=\phi(\pi)$ which we may assume is strictly positive since otherwise there is nothing to prove.  Let $c \in (0,1/2]$ be such that $s = -\log 2 /\log c > 0$. Let $(\pi_{n_k})_k$ be a sequence of points in $\Pi$ given by Lemma \ref{countable-set} such that  $\phi(\pi_{n_k}) \to \phi(\pi)$ as $k \to \infty$ and $\pi_{n_k} \to \pi$ with
\[
|\pi_{n_k} - \pi | \ \leq \ n_k^{-1/4}.
\]
Here we equip $G(1,2)$ with the arc length metric so that $|\pi_{n_k} - \pi |$ is equal to the smaller of the two angles formed by the subspaces $\pi_{n_k}$ and $\pi  $.  

Let $F_k := F_{\pi_{n_k},k}$ denote the set consisting of $2^{k+1}-2$ points which is associated to $\pi_{n_k}$; see \eqref{eq:F-def} for the precise definition. Further, let $c_k \in (0,1/2]$ be the constant associated with $F_k$ such that $-\log 2/\log c_k = \phi(\pi_{n_k})$. As $\phi(\pi_{n_k}) \to \phi(\pi)$, it follows that $c_k \to c$ as $k \to \infty$.

Let $I = [0,1]$ and, for $\lambda \in (0,1/2]$, let $E(\lambda) \subseteq I$ be the self-similar set associated with the maps $x \mapsto \lambda x$ and $x \mapsto \lambda x+(1-\lambda)$. For each $k \geq 1$ let $T_k \colon \mathbb{R} \to \mathbb{R}$ be the similarity given by
\[
T_k(x) \ = \ |\pi F_k|^{-1}  \left( x -\inf \pi F_k \right)
\]
where $| \cdot |$ denotes diameter of a set. Here we identify a given orthogonal projection of $\mathbb{R}^2$ with  $\mathbb{R}$ in the natural way. We claim that
\begin{equation} \label{eq:conv-to-Ec}
  T_k( \pi F_k ) \ \to \ Y(c)
\end{equation}
 in the Hausdorff metric as $k \to \infty$.   

First note that
\begin{eqnarray*}
1 & \leq & \frac{|\pi  F_k|}{|\pi_{n_k} F_k|} \ \leq \ 1+  \frac{\sin(|\pi_{n_k} - \pi |)v(g(\pi_{n_k},k) )}{h(g(\pi_{n_k},k))} \\
& \leq & 1+ \frac{\log(g(\pi_{n_k},k))}{n_k^{1/4}} \ \leq \ 1+\frac{2 \log n_k}{n_k^{1/4}} \ \to \ 1
\end{eqnarray*}
as $k \to \infty$.  This shows that
\[
d_\mathcal{H}\left( T_k( \pi F_k)  , T_k( \pi_{n_k} F_k )  \right)  \ \to \ 0
\]
as $k \to \infty$.  Moreover, noting that  $T_k( \pi_{n_k} F_k )  \subseteq Y(c)$, 
\[
d_\mathcal{H}\left( T_k( \pi_{n_k} F_k )  , Y(c)  \right)   \ \to \ 0
\]
and  therefore, by applying the triangle inequality, 
\[
d_\mathcal{H}\left(T_k( \pi F_k)  , Y(c) \right) \ \to \ 0
\]
as $k \to \infty$, proving \eqref{eq:conv-to-Ec}.

Since $\mathcal{K} \left(I\right)$ is compact we may extract a  subsequence of the sequence $T_k( \pi F ) \cap I$ which converges to a set $\widehat{\pi F}$ which is necessarily a weak tangent to $\pi F$.  Since $T_k( \pi F_k) \subseteq T_k( \pi F ) \cap I$ for all $k$ by the definition of $F$ and $T_k( \pi F_k )\to Y(c)$ as $k \to \infty$ by \eqref{eq:conv-to-Ec}, it follows that $Y(c) \subseteq \widehat{\pi F}$.  Therefore Proposition \ref{tangents} yields
\[
\ad \pi F \ \geq \ \ad \widehat{\pi F} \ \geq \ \ad Y(c) \ = \ s
\]
as required.  The final claim that $\ad Y(c) \ = \ s$ follows since by construction $E(c)$ is   a weak tangent to $Y(c)$. We could have obtained $E(c)$ directly as a subset of a weak tangent of $\pi F$ but we find the above argument more straightforward.

%%% 

Finally, we prove that $\ad \pi F \le \phi(\pi)$ for all $\pi \in G(1,2)$.  Fix  $\pi \in G(1,2)$ from now on.  We first argue that the projections $\pi F_{\pi',n}$ ($\pi' \in \Pi$, $n \geq 1$) are exponentially separated, i.e., they satisfy \eqref{separation}.  First suppose that $\pi \neq \pi^2$, where as above $\pi^2$ is projection onto the second coordinate. Since
\[
\frac{|F_{\pi',n}|}{2^{-g(\pi',n)}} \ \leq \ \frac{\sqrt{2} v(g(\pi',n))}{2^{-{g(\pi',n)}}} \ \to \ 0
\]
we have that the sets $\pi(F_{\pi',n})$ are pairwise disjoint for large enough $g(\pi',n)$ (or $n$), and even separated by a distance of at least a constant times $2^{-g(\pi',n)}$. More precisely, there exists a constant  $K \geq 1$  (which may depend on $\pi$)  such that, for all $(\pi', n) \in \Pi \times \mathbb{N}$,
\begin{equation*}
\begin{split}
\inf\{ |x-y| \ : \ x \in \pi F_{\pi'',m}, \; y \in \pi F_{\pi',n}, \text{ and } g(\pi'',m) > \;&g(\pi',n) \geq K\} \\ &\geq \ 2^{-1}|\cos(\theta)| 2^{-g(\pi',n)}
\end{split}
\end{equation*}
 where $\theta$ is the angle  $\pi$ makes with the horizontal axis.  Note that  $\cos(\theta) \neq 0$ since $\pi \neq \pi^2$. The angle $\pi$ makes with the horizontal axis is relevant because the translations $t_i$ lie on the graph of $x \mapsto x^2$, which is tangent to the horizontal axis.  Secondly, suppose that $\pi = \pi^2$. Similar to above, since
\[
\frac{|F_{\pi',n}|}{4^{-g(\pi',n)}} \ \leq \ \frac{\sqrt{2} v(g(\pi',n))}{4^{-{g(\pi',n)}}} \ \to \ 0
\]
we have that the sets $\pi F_{\pi',n}$ are pairwise disjoint for large enough $g(\pi',n)$ (or $n$), and even separated by a distance of at least a constant times $4^{-g(\pi',n)}$.  We are forced to replace $2^{-g(\pi',n)}$ with $4^{-g(\pi',n)}$ here because the $\pi^2$-projection of the set of translations $t_i$ is the sequence $4^{-i}$, rather than (asymptotic to) $\cos(\theta) 2^{-i}$.  Since each of the sets $F_{\pi',n}$ are finite and the Assouad dimension is finitely stable, we may assume without loss of generality that these separation properties holds for all $F_{\pi',n}$.   That is, we may assume that for all $(\pi',n) \neq (\pi'',m)$ with $g(\pi'',m) > g(\pi',n)$ we have
\begin{equation} \label{separation}
\inf\{ |x-y| \ : \ x \in \pi F_{\pi'',m}, \; y \in \pi F_{\pi',n}\}  \ \geq \ C(\pi) 4^{-g(\pi',n)}
\end{equation}
for a constant $C(\pi)$ depending only on $\pi$.

Suppose $E$ is a weak tangent to $\pi F$.  By Proposition \ref{tangents2}, it suffices to show that $\dim_\textup{H} E\leq \phi(\pi)$.  Since $E$ is a weak tangent,  there exists a sequence of similarity maps $T_l$ on $\mathbb{R}$ such that $T_l(\pi F) \cap B(0,1) \to E$ in the Hausdorff metric as $l \to \infty$. We may assume that there are arbitrarily large $k$ such that $T_l(\pi F_{\pi',k}) \cap B(0,1)  \neq \emptyset$  for some $l$ and some $\pi' \in \Pi$.  Otherwise $E$ is a weak tangent of a finite set and would therefore be finite itself. We may also assume that $\|T_l\| \to \infty$ since otherwise $E$ is countable. Here $\|T\|$ denotes the similarity ratio of a similarity map $T$, which is the operator norm of the linear part of $T$, hence the notation.   Therefore, by taking a subsequence of $(T_l)_l$ we may assume that
\begin{equation} \label{growth}
\min\{k \ : \ T_l(\pi F_{\pi',k}) \cap B(0,1)  \neq \emptyset \text{ for some $\pi'$}\} \ \to \ \infty
\end{equation}
as $l \to \infty$.

For a given $l \geq 1$, suppose for some large $k \geq 1$ we have $T_l(\pi F_{\pi',k}) \cap B(0,1)  \neq \emptyset$ for some $\pi'$ and $|T_l(\pi F_{\pi',k})| \geq 2^{-g(\pi',k)}$. It follows that
\[
\|T_l\| \sqrt{2} v(g(\pi',k)) \ \geq \ \|T_l\|  |\pi F_{\pi',k}| \ = \ |T_l(\pi F_{\pi',k})| \ \geq \ 2^{-g(\pi',k)}
\]
and therefore
\[
\|T_l\|^{-1} \ \leq \ 2^{g(\pi',k)} v(g(\pi',k))\sqrt{2} \ < \ 5^{-g(\pi',k)}\sqrt{2}.
\]
It follows from the separation property \eqref{separation} that $T_l(\pi F_{\pi'',i}) \cap B(0,1)  = \emptyset$ for all $(\pi'',i) \neq (\pi',k)$, provided that $k$ is sufficiently large, depending on $\pi$.

On the other hand, suppose that for all large enough $l$ we have $|T_l(\pi F_{\pi',k})| < 2^{-g(\pi',k)}$ for all $k$ and $\pi'$ such that $T_l(\pi F_{\pi',k}) \cap B(0,1)  \neq \emptyset$.  This means that the contribution from individual $\pi(F_{\pi',k})$ vanishes to a single point in the limit and $E$ must be a weak tangent of the projection of the closure of the set of translations, $\{\pi(t_i) \}_i \cup \{0\}$,  and therefore has Hausdorff dimension $0$.  Therefore we may assume (by taking a subsequence if necessary) that for all $l$ there exists a unique pair $(\pi(l), k(l))$ such that $T_l(\pi F_{\pi(l),k(l)}) \cap B(0,1)  \neq \emptyset$ and, moreover, that $|T_l(\pi F_{\pi(l),k(l)})| \geq 2^{-g(\pi(l),k(l))}$. Therefore,
\begin{equation} \label{convtotang}
T_l(\pi F_{\pi(l),k(l)}) \cap B(0,1) \ \to \ E.
\end{equation}
Using  compactness of $G(1,2)$, we may assume that $\pi(l) \to \pi^* \in G(1,2)$ as $l \to \infty$.  If $\pi^* \neq \pi$, then   $E$ is a weak tangent of $Z = \{0\} \cup \{ 2^{-i} : i=1, 2, \dots\}$ and therefore has dimension 0.  The case when $\pi^* = \pi$  is more delicate and is interestingly the only point in the proof where we use upper semi-continuity of $\phi$.

Consider $F_l := F_{\pi(l),k(l)}$ which, for large $l$, is contained in a very long and thin rectangle with long side almost orthogonal to the subspace $\pi$.  Let
\[
Y_l'(c) \ = \ R_{\pi(l)}  \circ A_{g(\pi(l),k(l))} ( Y_{k(l)}(c) \times \{0\})+t_{g(\pi(l),k(l))}
\]
which, by recalling \eqref{eq:F-def}, is simply the projection of $F_l$ onto the base of the rectangle containing it.  Similarly, let 
\[
Z_l' \ = \ R_{\pi(l)} \circ A_{g(\pi(l),k(l))} ( \{0\} \times Z_{k(l)})+t_{g(\pi(l),k(l))}
\]
which is simply the projection of $F_l$ onto the long side of the rectangle containing it.  Order the points in  $F_l$ by writing $F_l =\{x_1, x_2, \dots, x_{2^{k(l)+1}-2}\}$ in the natural `from top to bottom' order. For each $x_n \in F_l$ label the corresponding point in $Y_l'(c)$ by $y_n$ and the corresponding point in $Z_l'$ by $z_n$. 

The ratio
\[
\frac{|\pi F_{\pi(l),k(l)}|}{|\pi(l) F_{\pi(l),k(l)}|} \geq 1
\]
again plays a key role in determining the possible tangents. By taking a subsequence if necessary we may assume that either this ratio is uniformly bounded away from 1 from below or converges to 1.  First assume the former. In this case we may assume  the diameter $|T_l(\pi Z_l')|$ remains uniformly bounded away from 0 since otherwise $E$ is a single point. Moreover, using \eqref{yydecay},   there is a  constant $b\geq 1$ such that, for all $n$ such that $T_l(\pi(x_n))  \in B(0,1)$,
\[
T_l(\pi(x_n)) \ \in \ B(T_l(\pi(z_n)), b3^{-n}).
\]
It follows  that there is a non-trivial affine map $f \colon \mathbb{R} \to \mathbb{R}$ such that $E$ consists of a finite set together with a set which is contained in the closure of 
\[
 \bigcup_{n\geq 1} B(f(2^{-n}), b3^{-n})
\]
with at most one point in each ball.   In particular, $\hd E = \ad E = 0$.

Finally, we are left with the case when 
\begin{equation} \label{goodcase}
\frac{|\pi F_{\pi(l),k(l)}|}{|\pi(l) F_{\pi(l),k(l)}|} \ \to \ 1
\end{equation}
as $l \to \infty$, which  is similar to the proof of the lower bound, above.  In fact, if $\|T_l\| / h(g(\pi(l),k(l)))$ is uniformly bounded above, then the proof proceeds exactly as in the lower bound and we find that $E$ is a weak tangent to $Y(c_0)$ for some $c_0 \leq c$ (using upper semi-continuity) and therefore has dimension at most $\phi(\pi)$.  Therefore, by taking a subsequence we can assume
\[
\frac{\|T_l\|}{h(g(\pi(l),k(l)))} \ \to \ \infty.
\]
This corresponds to the maps $T_l$ only focusing on increasingly small parts of $\pi F_{\pi(l),k(l)}$ and not the whole set.   Let  $c_l$ be   such that 
\[
\phi(\pi_{k(l)}) \ = \ \frac{\log 2}{-\log c_l}.
\]
Upper semi-continuity implies that $\limsup_{l \to \infty} \phi(\pi_{k(l)})   \leq \phi(\pi)$ and therefore we may assume $c_l \to c_0 \leq c$.  Recall that
\[ 
Y_n (c_l) \ = \ \bigcup_{m=1}^n \left( 9^{-2^m}+16^{-2^m}E_m(c_l)\right)
\]
and consider the corresponding decomposition of $\pi F_{\pi(l),k(l)}$ into `clusters'  corresponding to points in $E_m(c_l)$.  Since the diameter of these clusters is shrinking much faster than the corresponding translations, if each $T_l$ `sees' more than once cluster (that is, the $T_l$-images of more than one cluster  intersect  $B(0,1)$), then the relative diameter of the $T_l$ images of the clusters intersecting $B(0,1)$ approaches 0 in the limit and $E$ is countable and therefore has Hausdorff dimension 0.  Therefore the only way for $E$ to have Hausdorff dimension strictly larger than 0 is if each $T_l$ only `sees' one cluster in $\pi F_{\pi(l),k(l)}$.  We may therefore assume that $T_l(\pi F_{\pi(l),k(l)}) \cap B(0,1)$ consists of a cluster of $2^{m(l)}$ points corresponding to  the projection of the $m(l)$th level approximation of the self-similar set  $E(c_l)$ which has dimension $\phi(\pi_{k(l)})$.

Using \eqref{goodcase} we claim that this sequence of clusters   approaches an affine copy of the  self-similar set  $E(c_0)$ union an affine copy of $Z$ in the Hausdorff metric.  This finishes the proof, since such a set has Hausdorff dimension at most $\phi(\pi)$. 

To justify our claim, consider the points in the cluster labelled as $\{p_1, p_2, \dots, p_{2^{m(l)}}\} \subset T_l(\pi F_{\pi(l),k(l)}) \cap B(0,1)$ according to their position in $F_{\pi(l),k(l)}$ from `top to bottom' and compare these to the corresponding points in an appropriately chosen  affine image of $E_{m(l)}(c_0)$, labelled accordingly as $\{e_1, e_2, \dots, e_{2^{m(l)}}\}$.  The right affine map to choose is $f \colon \mathbb{R} \to \mathbb{R}$ given by $f(w) = aw+b$ where $a \in (0,2]$ is the diameter of $\pi(l) (\{p_1, p_2, \dots, p_{2^{m(l)}}\})$ and $b$ is chosen such that  $p_{2^{m(l)}}= e_{2^{m(l)}}$.

\begin{figure}[t]
\centering
\begin{tikzpicture}[scale=1]
  % rectangle
  \draw (0,2.5) -- (0,0) -- (2,0) -- (2,2.5);
  \draw [dotted] (0,2.5) -- (0,3);
  \draw [dotted] (2,2.5) -- (2,3);
  \draw (0,3) -- (0,5.5) -- (2,5.5) -- (2,3);

  % projection direction
  \draw [rotate=-7] (-1.5,3.5) -- (-1.5,2.25) node [left] {$\pi$};
  \draw [rotate=-7,->] (-1.5,2.25) -- (-1.5,1);

  % projection is onto the following line
  \draw [rotate=-7] (-2,-1) -- (3,-1);

  % upper-right points in the rectangle
  \node at (2,5.5) [circle,fill,inner sep=1pt]{};
  \node at (1.7,4.2) [circle,fill,inner sep=1pt]{};
  \node at (1.3,3.5) [circle,fill,inner sep=1pt]{};
  \node at (1,3.3) [circle,fill,inner sep=1pt]{};

  % projection of the upper-right points
  \draw [dotted,xshift=2 cm,yshift=5.5 cm,rotate=-7] (0,0) -- (0,-6.7);
  \draw [dotted,xshift=1 cm,yshift=3.3 cm,rotate=-7] (0,0) -- (0,-4.4);

  % lower-left points in the rectangle
  \node at (0.4,2.5) [circle,fill,inner sep=1pt]{};
  \node at (0.37,1.3) [circle,fill,inner sep=1pt]{};
  \node at (0.33,0.625) [circle,fill,inner sep=1pt]{};
  \node at (0.3,0.3) [circle,fill,inner sep=1pt]{};

  % projection of the lower-left points
  \draw [dotted,xshift=0.4 cm,yshift=2.5 cm,rotate=-7] (0,0) -- (0,-3.5);
  \draw [dotted,xshift=0.3 cm,yshift=0.3 cm,rotate=-7] (0,0) -- (0,-1.3);
  \end{tikzpicture}
  \caption{The projection of points in the upper part of the rectangle  approximate $Z$ and the projection of points in the lower part approximate $E(c_0)$. The lower part of the rectangle has been magnified for illustrative purposes.}
  \label{fig:illustration2}
\end{figure}
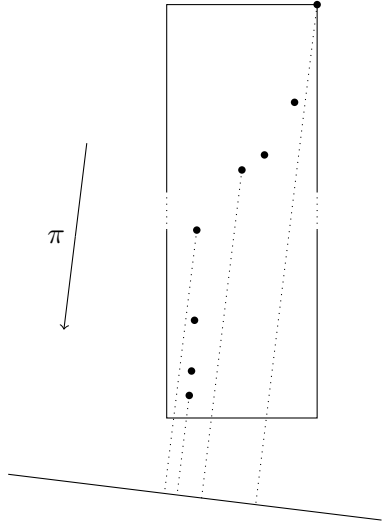

The pointwise error decreases as the label increases, but we cannot control the error between the first pair of points although it must be bounded above by 2 since both lie in $B(0,1)$.   (The error uniformly  approaches 0 when considering the corresponding points in $\pi F_{\pi(l),k(l)}$, but applying $T_l$ magnifies the error.)  However, what we can say is that the error decreases geometrically, with
\[
|p_t-e_t| \ \leq \ 2^{2-t}.
\] 
This follows from the positioning of the points in $ F_{\pi(l),k(l)}$ decaying geometrically in the direction orthogonal to the subspace $\pi(l)$.  Consider the decomposition of the cluster as
\[
\{p_1, p_2, \dots, p_{ m(l)} \} \cup \{p_{ m(l)+1}, p_2, \dots, p_{2^{m(l)}}\}
\]
and note that the set on the left lies within $m(l)2^{-m(l)}$ of an affine image of $Z$ and the set on the right lies within $2^{-m(l)+1}$ of an affine image of $E(c_l)$.  We may conclude that $E$ is a weak tangent of the union of an affine copy of $E(c_0)$ and an affine copy of $Z$. \hfill \qed

\section{Final remarks} \label{discussion}

We proved that for any upper semi-continuous function  $\phi\colon G(1,2) \to [0,1]$ there exists a compact set $F \subset \mathbb{R}^2$ such that $\dim_\textup{A} \pi F = \phi(\pi)$ for all $\pi \in G(1,2) $.  Notice, however, that the function $\pi \mapsto \ad \pi F$ need not be upper semi-continuous in general since a line segment satisfies $\dim_\textup{A} \pi F = 1$ for all but one subspace $\pi$ where the dimension is 0.  It is therefore natural to ask what sort of functions can be realised as $\pi \mapsto \ad \pi F$.  We observe that the set
\[
\{ \phi\colon G(1,2) \to [0,1] \ : \  \phi(\pi) = \dim_\textup{A} \pi F  \text{ for some compact $F \subset \mathbb{R}^2$} \}
\]
has cardinality $2^{\aleph_0}$: Theorem \ref{main1} implies that it has cardinality at least $2^{\aleph_0}$ but it cannot be more than $2^{\aleph_0}$ since there is a natural surjection from the set of compact subsets of $ \mathbb{R}^2$ onto this set.  In particular, there must be functions  which \emph{cannot} be realised as $\pi \mapsto \ad \pi F$ for a compact set $F$, since the set of all functions has cardinality strictly greater than $2^{\aleph_0}$.

\vspace{6mm}

%\newpage

\begin{centering}

\textbf{Acknowledgements}

JMF was financially supported by  a \emph{Leverhulme Trust Research Fellowship} (RF-2016-500) and  an \emph{EPSRC Standard Grant} (EP/R015104/1). AK was financially supported by the \emph{Finnish Center of Excellence in Analysis and Dynamics Research} and the aforementioned EPSRC grant, which covered the local costs of the visit of AK to the University of St Andrews in May 2018 where this research began. The authors thank Eino Rossi for pointing out a simple improvement which got rid of a minor additional assumption used in an earlier version of the paper. The  authors also thank Tuomas Orponen for many helpful discussions on the topic. Finally, the authors thank an anonymous referee for making several useful suggestions.
\end{centering}

%\newpage

\begin{multicols}{2}{

\noindent \emph{Jonathan M. Fraser\\
School of Mathematics and Statistics\\
The University of St Andrews\\
St Andrews, KY16 9SS, Scotland} \\

\noindent  Email: jmf32@st-andrews.ac.uk\\ \\

\noindent \emph{Antti K\"aenm\"aki\\
Department of Physics and Mathematics\\
University of Eastern Finland\\
P.O. Box 111, FI-80101 Joensuu, Finland} \\

\noindent  Email: antti.kaenmaki@uef.fi\\ \\
}

\end{multicols}

\end{document}